\newtheorem{theorem}{Theorem}
\newtheorem{lemma}[theorem]{Lemma}
\newtheorem{proposition}[theorem]{Proposition}
\newenvironment{proof}[1][Proof]{\noindent \textbf{#1.} }{\ \rule{0.5em}{0.5em}}
\newcommand{\R}{\mathbb{R}}
\newcommand{\dee}{\mathop{\! \,\mathrm{d} \!}\nolimits}
\newcommand{\comp}{\raisebox{0pt}{$\scriptstyle\circ \, $}}
\newcommand{\setrule}{\, \rule[-4pt]{.5pt}{13pt}\, }
\begin{document}
\title{\bf \large On Subcartesian Spaces\\
Leibniz's Rule Implies the Chain Rule}
\author{Richard Cushman and J\k{e}drzej \'{S}niatycki\thanks{Department 
of Mathematics and Statistics, University of Calgary. \newline
\rule{15pt}{0pt}email: rcushman@ucalgary.ca and sniatycki@ucalgary.ca}}
\date{}
\maketitle
\addtocounter{footnote}{1}
\footnotetext{printed: 24 March 2019}

\begin{abstract}
We show that derivations of the differential structure of a subcartesian space
satisfy chain rule and have maximal integral curves.  
\end{abstract}\medskip

\section{Introduction}

The structure of a smooth manifold is usually described in terms of its
complete atlas. In 1967, Aronszajn \cite{aronszajn} applied this description
to Hausdorff spaces that are locally diffeomorphic to arbitrary subsets of $%
\mathbb{R}^{n}$, which he called subcartesian spaces. In 1973, Walczak \cite%
{walczak 1973} showed that that subcartesian spaces of Aronszajn are special
cases of differential spaces introduced by Sikorski \cite{sikorski 1967}.
This implied that the geometric structure of a subcartesian space $S$ can be
completely described by its ring of smooth functions $\mathcal{C}^{\infty
}(S)$.\smallskip 

In recent years, the notion of $\mathcal{C}^{\infty }$-rings and $\mathcal{C}%
^{\infty }$-ringed spaces appeared as part of Spivak's definition of derived
manifolds \cite{spivak 2010}. Joyce \cite{joyce 2012} developed an alternative theory of
derived differential geometry going beyond Spivak's derived manifolds.\smallskip 

The definition of derivations of $\mathcal{C}^{\infty }$-rings are required
to satisfy chain rule, while derivations of the differential structure $%
\mathcal{C}^{\infty }(S)$ of a differential space $S$ are defined
algebraically in terms of Leibniz's rule. We show that, if $S$ is
subcartesian, the derivations of $\mathcal{C}^{\infty }(S)$ also satisfy the
chain rule. This ensures that subcartesian spaces do not require the additional
assumption that their differential structures are $\mathcal{C}^{\infty }$%
-rings. In particular, this justifies integration of derivations of
differential structures of subcartesian spaces studied in \cite{sniatycki
2013}.

\section{Differential spaces}

A \emph{differential structure} on a topological space $S$ is a family 
$\mathcal{C}^{\infty }(S)$ of real valued functions on $S$ satisfying the following
conditions.\medskip

\par 1. \parbox[t]{4in}{The family 
$\{f^{-1}(I)\setrule \,  f\in \mathcal{C}^{\infty }(S),\,\mbox{and $I$ is an
open interval in $\mathbb{R}$} \}$ is a sub-basis for the topology of $S$.}\smallskip 

\par  2. \parbox[t]{4in}{If $f_{1},...,f_{n}\in \mathcal{C}^{\infty }(S)$ and $F\in 
\mathcal{C}^{\infty }(\mathbb{R}^{n})$, then $F(f_{1},...,f_{n})\in \mathcal{C}^{\infty }(S)$.} 
\smallskip 

\par  3. \parbox[t]{4in}{If $f:S\rightarrow \mathbb{R}$ is a function such that, for
every $x\in S$, there exists an open neighbourhood $U$ of $x$ and a function 
$f_{x}\in \mathcal{C}^{\infty }(S)$ satisfying 
\[
f_x|_U=f|_U
\]%
then $f\in \mathcal{C}^{\infty }(S).$ Here the vertical bar $\mid $
denotes restriction.}\medskip  

Functions $f\in \mathcal{C}^{\infty }(S)$ are called \emph{smooth} functions on $S$. 
It follows from condition 1 that smooth functions on $S$ are continuous. 
Condition 2 with $F(f_{1},f_{2})=af_{1}+bf_{2}$, where $a,b\in \mathbb{R}$
implies that $\mathcal{C}^{\infty }(S)$ is a vector space. Similarly, taking 
$F(f_{1},f_{2})=f_{1}f_{2}$, we conclude that $\mathcal{C}^{\infty }(S)$ is
closed under multiplication of functions. A topological space $S$ endowed
with a differential structure is called a \emph{differential space}.\smallskip

In his original definition, Sikorski \cite{sikorski 1972} defined 
$\mathcal{C}^{\infty }(S)$ to be a family of functions satisfying condition 2. Then, he
used condition 1 to define topology on $S$. Finally, he imposed condition 3
as a consistency condition. \medskip

A map $\varphi :R\rightarrow S$ is \emph{smooth}  if $\varphi ^{\ast }f=f\comp
\varphi \in \mathcal{C}^{\infty }(R)$ for every $f\in \mathcal{C}^{\infty}(S)$. 
A smooth map $\varphi $ between differential spaces is a
\emph{diffeomorphism} if it is invertible and its inverse is smooth.\smallskip 

\begin{proposition}
A smooth map between differential spaces is continuous.
\end{proposition}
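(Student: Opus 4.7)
The plan is to show continuity by checking it on a sub-basis of the topology of the target space, exploiting condition 1 of the definition of a differential structure.

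First I would fix a smooth map $\varphi\colon R\to S$ between differential spaces and recall the standard fact from point-set topology that a map is continuous if and only if the preimage of every element of some sub-basis for the codomain topology is open. By condition 1, the collection $\{f^{-1}(I)\setrule f\in\mathcal{C}^\infty(S),\ I\subseteq\mathbb{R}\text{ open interval}\}$ forms such a sub-basis for $S$. So it suffices to show $\varphi^{-1}(f^{-1}(I))$ is open in $R$ for arbitrary $f\in\mathcal{C}^\infty(S)$ and open interval $I\subseteq\mathbb{R}$.

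Next I would rewrite the preimage as $\varphi^{-1}(f^{-1}(I))=(f\comp\varphi)^{-1}(I)=(\varphi^\ast f)^{-1}(I)$. By the definition of smoothness, $\varphi^\ast f\in\mathcal{C}^\infty(R)$. The remark immediately following the axioms (continuity of smooth functions, which is itself an instance of condition 1 applied to $R$) gives that $\varphi^\ast f\colon R\to\mathbb{R}$ is continuous, and therefore $(\varphi^\ast f)^{-1}(I)$ is open in $R$. This completes the argument.

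There is no real obstacle here: the only nontrivial input is the sub-basis formulation of continuity, and everything else is essentially unwinding the definitions. The proof is short enough that one could even write it in a single displayed chain of equalities $\varphi^{-1}(f^{-1}(I))=(\varphi^\ast f)^{-1}(I)$ combined with a one-line appeal to continuity of elements of $\mathcal{C}^\infty(R)$.
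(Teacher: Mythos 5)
Your proof is correct and complete: the sub-basis criterion for continuity, the identity $\varphi^{-1}(f^{-1}(I))=(\varphi^{\ast}f)^{-1}(I)$, and the continuity of elements of $\mathcal{C}^{\infty}(R)$ (condition 1 applied to $R$) are exactly the ingredients needed. The paper itself only cites \cite{sniatycki 2013} for this proposition, and your argument is the standard one given there, so there is nothing to add.
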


\begin{proof}
See the proof of proposition 2.1.5 in \cite{sniatycki 2013}
\end{proof}\medskip

A differential space $S$ is \emph{subcartesian} if its topology is Hausdorff and every point 
$x\in S$ has a neighbourhood $U$ diffeomorphic to a subset $V$ of 
$\mathbb{R}^{n}$. It should be noted that $V$ in the definition above may be an arbitrary
subset of $\mathbb{R}^{n}$, and $n$ may depend on $x\in S$. As in the theory
of manifolds, diffeomorphisms of open subsets of $S$ onto subsets of 
$\mathbb{R}^{n}$ are called charts on $S$. The family of all charts is the
complete atlas on $S$. Aronszajn \cite{aronszajn} used the notion of a complete atlas on a
Hausdorff topological space in his definition of subcartesian space.  

\section{Derivations at a point}

Let $S$ be a subcartesian space with differential structure $C^{\infty }(S)$. A \emph{derivation of} $C^{\infty }(S)$ \emph{at a point} $x\in S$ is a linear map 
$v_{x}:C^{\infty }(S)\rightarrow \mathbb{R}$ such that 
\begin{equation}
v_{x}(f_{1}f_{2})=v_{x}(f_{1})f_{2}(x)+f_{1}(x)v_{x}(f_{2})  \label{10}
\end{equation}%
for every $f_{1},f_{2}\in C^{\infty }(S)$. \medskip 

If $M$ is a manifold, then derivations of $C^{\infty }(M)$ satisfy the chain rule. In other words, for every $v\in TM$, $f_{1},...,f_{k}\in C^{\infty }(M)
$ and $F\in C^{\infty }(\mathbb{R}^{k})$%
\begin{equation}
vF(f_{1},...,f_{k})=[\partial _{1}F(f_{1},...,f_{k})(\tau
(v))]vf_{1}+...+[\partial _{k}F(f_{1},...,f_{k})(\tau (v))]vf_{1},
\label{12a}
\end{equation}%
where $\tau : TM \rightarrow M$ is the tangent bundle projection map and 
$\partial _{1},...,\partial _{k}$ are partial derivatives in $\mathbb{R}^{k}$. Our aim in this section is to show that the chain rule is also valid for derivations
of $C^{\infty }(S)$, where $S$ is a subcartesian space.

\begin{lemma}
\label{Lemma 3.2}Let $v$ be a derivation of $C^{\infty }(S)$ at $x\in S$.
For every open neighbourhood $U$ of $x$ and every $f\in C^{\infty }(S),$ $vf$
depends only on the restriction of $f$ to $U$. 
\end{lemma}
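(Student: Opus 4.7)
The plan is to restate the lemma as a locality property: if $f_1,f_2\in C^\infty(S)$ satisfy $f_1|_U=f_2|_U$, then $v(f_1)=v(f_2)$. By linearity of $v$, it suffices to prove the vanishing version: if $f\in C^\infty(S)$ and $f|_U=0$, then $vf=0$. Note that in this situation $f(x)=0$ automatically, since $x\in U$.

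The key idea is the standard ``bump function'' trick: I will construct $g\in C^\infty(S)$ with $g(x)=0$ and with $g\equiv 1$ on the set where $f$ is nonzero, so that $fg=f$ pointwise on $S$. Granting such a $g$, Leibniz's rule (\ref{10}) gives
\[
vf \;=\; v(fg) \;=\; v(f)\,g(x) + f(x)\,v(g) \;=\; 0,
\]
since $g(x)=0$ and $f(x)=0$, completing the proof.

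The main work, and the only real obstacle, is producing $g$. I will do this by exploiting the subcartesian hypothesis. Choose a chart $\varphi:U'\to V\subset\mathbb{R}^n$ with $x\in U'\subset U$. Pick open neighbourhoods $W_1\subset\overline{W_1}\subset W_2\subset\overline{W_2}\subset U'$ of $x$ whose images in $\mathbb{R}^n$ sit inside small concentric balls around $\varphi(x)$. On $\mathbb{R}^n$ there is a standard smooth function $\tilde g$ that equals $0$ on a neighbourhood of $\varphi(x)$ containing $\varphi(W_1)$ and equals $1$ outside the larger ball containing $\varphi(W_2)$. Pulling back, $g_0=\varphi^{*}\tilde g$ is smooth on $U'$ (by condition 2 applied through the chart), vanishes near $x$, and equals $1$ on $U'\setminus W_2$.

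Finally, extend $g_0$ globally by defining $g(y)=g_0(y)$ for $y\in U'$ and $g(y)=1$ for $y\in S\setminus\overline{W_2}$; these two prescriptions agree on their overlap, so by condition 3 of the differential structure $g\in C^\infty(S)$. By construction $g(x)=0$ and $g\equiv 1$ on $S\setminus W_2\supset S\setminus U$. Since $f$ vanishes on $U\supset W_2$, the product $fg$ agrees with $f$ everywhere, and the Leibniz computation above yields $vf=0$. The only nontrivial ingredient is the bump construction, which reduces, via a chart, to the corresponding classical fact in $\mathbb{R}^n$.
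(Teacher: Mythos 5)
Your overall strategy is the same as the paper's: reduce by linearity to the case $f|_{U}=0$, multiply by a cut-off function, and apply Leibniz's rule at $x$. Your $g$ is essentially the complement $1-h$ of the bump $h$ that the paper imports from lemma 2.2.1 of \cite{sniatycki 2013} ($h\equiv 1$ near $x$ and $h\equiv 0$ off $U$, so the paper exploits $h(f_1-f_2)=0$ where you exploit $fg=f$). The reduction and the Leibniz computation are correct.

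The gap is in your construction of $g$. You need an open set $W_2\ni x$ with $\overline{W_2}\subset U'$, where the closure is taken in $S$: otherwise the two open sets $U'$ and $S\setminus\overline{W_2}$ on which you prescribe $g$ do not cover $S$, condition 3 cannot be invoked at points of $\overline{W_2}\setminus U'$, and $g$ need not even be continuous there. Pulling back concentric balls through the chart only controls the closure of $W_2$ \emph{relative to} $U'$; the closure in $S$ can a priori leak outside the chart domain, since nothing in the definition of a subcartesian space visibly guarantees regularity. The statement is in fact true, but the clean repair bypasses the chart entirely: by condition 1 there are $f_1,\dots,f_k\in C^{\infty}(S)$ and open intervals $I_1,\dots,I_k$ with $x\in\bigcap_i f_i^{-1}(I_i)\subseteq U$; composing $(f_1,\dots,f_k)$ with a Euclidean bump function that is $1$ near $(f_1(x),\dots,f_k(x))$ and vanishes outside a compact subset of $I_1\times\cdots\times I_k$ yields, by condition 2, a globally smooth $h$ with $h\equiv 1$ on a neighbourhood of $x$ and $h\equiv 0$ on an open $W$ with $W\cup U=S$. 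This is exactly the content of the lemma the paper cites: it requires no gluing and no subcartesian hypothesis, and (by shrinking the intervals) it also delivers the regularity your chart-based gluing was implicitly assuming. A secondary point: your appeal to ``condition 2 applied through the chart'' to conclude $\varphi^{*}\tilde g\in C^{\infty}(U')$, and the subsequent use of condition 3, tacitly rely on the definition of the induced differential structure on the open subset $U'$ (smooth functions on $U'$ are locally restrictions of elements of $C^{\infty}(S)$), which is standard but should be said.
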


\begin{proof}
Let $f_{1},f_{2}\in C^{\infty }(S)$ agree on a neighbourhood $U$ of $x\in S$. 
By lemma 2.2.1 of \cite{sniatycki 2013} there exists a function $h\in
C^{\infty }(S)$ satisfying $h_{\mid V}=1$ for some neighbourhood $V$ of $x$
contained in $U,$ and $f_{\mid W}=0$ for some open set $W$ in $S$ such that $%
U\cup W=S$. Then $h(f_{1}-f_{2})=0$, so that $v[h(f_{1}-f_{2})]=0$. Hence, 
\[
0=v[h(f_{1}-f_{2})]=(vh)(f_{1}-f_{2})(x)+[v(f_{1}-f_{2})]h(x)=vf_{1}-vf_{2}
\]%
because $f_{1}(x)=f_{2}(x)$ and $h(x)=1.$ This implies $vf_{1}=vf_{2}$ as
required. 
\end{proof}\medskip 

Let $\varphi :S\rightarrow R$ be a smooth map between differential spaces
with differential structures $C^{\infty }(S)$ and $C^{\infty }(R)$,
respectively. 

\begin{lemma}
\label{Lemma 3.3}The map $\varphi :S\rightarrow R$ assigns to each
derivation $v$ of $C^{\infty }(S)$ at $x\in S$ a derivation $T\varphi (v)$
of $C^{\infty }(R)$ at $\varphi (x)\in R$ such that, for every $f\in C^{\infty }(R)$, 
\begin{equation}
T\varphi (v) f=v(\varphi ^{\ast }f). \medskip 
\label{13a}
\end{equation}
\end{lemma}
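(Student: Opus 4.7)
The plan is to define $T\varphi(v)$ by the formula \eqref{13a} and then verify in turn that (i) the formula makes sense, (ii) the resulting map is linear, and (iii) it satisfies the Leibniz rule at $\varphi(x)$. All three steps are essentially bookkeeping: the real content is already encoded in the fact that $\varphi$ is smooth, which guarantees that pullback by $\varphi$ sends $C^\infty(R)$ into $C^\infty(S)$.

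First I would check well-definedness: by the definition of a smooth map, for each $f \in C^\infty(R)$ the pullback $\varphi^{\ast}f = f \comp \varphi$ lies in $C^\infty(S)$, so the right-hand side $v(\varphi^{\ast}f)$ of \eqref{13a} is a legitimate real number. Thus the formula unambiguously defines a map $T\varphi(v) : C^\infty(R) \to \mathbb{R}$. Next I would verify $\mathbb{R}$-linearity: since $\varphi^{\ast}(af_1 + bf_2) = a\varphi^{\ast}f_1 + b\varphi^{\ast}f_2$ as functions on $S$, linearity of $T\varphi(v)$ reduces to linearity of $v$, which holds by hypothesis.

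The Leibniz rule at $\varphi(x)$ is the only step that is slightly more than purely formal, but it is still routine. Using $\varphi^{\ast}(f_1 f_2) = (\varphi^{\ast}f_1)(\varphi^{\ast}f_2)$ and then applying \eqref{10} to $v$ gives
\[
T\varphi(v)(f_1 f_2) = v\bigl((\varphi^{\ast}f_1)(\varphi^{\ast}f_2)\bigr) = v(\varphi^{\ast}f_1)\cdot(\varphi^{\ast}f_2)(x) + (\varphi^{\ast}f_1)(x)\cdot v(\varphi^{\ast}f_2).
\]
Rewriting $(\varphi^{\ast}f_i)(x) = f_i(\varphi(x))$ and recognising $v(\varphi^{\ast}f_i) = T\varphi(v)f_i$ yields the Leibniz rule based at the point $\varphi(x) \in R$, as required.

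I do not expect any genuine obstacle here; no appeal to Lemma~\ref{Lemma 3.2} or to the subcartesian hypothesis is needed, since the statement concerns only the algebraic Leibniz condition and not the chain rule. The argument works for derivations on any differential space, and the multiplicativity of $\varphi^{\ast}$, which is the single nontrivial identity invoked, is immediate from the pointwise definition of both multiplication and pullback.
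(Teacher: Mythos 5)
Your proof is correct and follows essentially the same route as the paper's: define $T\varphi(v)$ by \eqref{13a}, note that smoothness of $\varphi$ makes $\varphi^{\ast}f \in C^{\infty}(S)$ so the formula is well defined, then verify linearity and the Leibniz rule at $\varphi(x)$ using the multiplicativity of $\varphi^{\ast}$. Your closing remark that no subcartesian hypothesis is needed is also consistent with the paper, which states and proves the lemma for arbitrary differential spaces.
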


\begin{proof}
For every $f\in C^{\infty }(R)$, $\varphi ^{\ast }f=f\comp \varphi $ is in $%
C^{\infty }(S)$, and we may evaluate the derivation $v$ on $\varphi ^{\ast
}f $. Note that, for $f_{1},f_{2}\in C^{\infty }(R)$ and $c_{1},c_{2}\in 
\mathbb{R},$ 
\begin{align}
T\varphi (v) (c_{1}f_{1}+c_{2}f_{2}) &= v(\varphi ^{\ast
}(c_{1}f_{1}+c_{2}f_{2})=v(c_{1}\varphi ^{\ast }f_{1})+v(c_{2}\varphi ^{\ast}f_{2}) \notag \\
&\hspace{-.5in}=c_{1}v(\varphi ^{\ast }f_{1})+c_{2}v(\varphi ^{\ast }f_{2})=c_{1}\left(
T\varphi (v)\right) f_{1}+c_{2}\left( T\varphi (v)\right) f_{2}. \notag
\end{align}%
Hence, $f \mapsto T\varphi (v)  f$ is a linear mapping of $C^{\infty }(R)$ into itself.
For $f_{1},f_{2}\in C^{\infty }(R)$, equation (\ref{13a}) yields 
\begin{eqnarray*}
T\varphi (v)(f_{1}f_{2}) &=&v((\varphi ^{\ast }f_{1})(\varphi ^{\ast
}f_{2}))=v(\varphi ^{\ast }f_{1})\varphi ^{\ast }f_{2}(x)+\varphi ^{\ast}f_{1}(x)v(\varphi ^{\ast }f_{2}) \\
&=&[T\varphi (v)f_{1}][f_{2}(\varphi (x))]+[f_{1}(\varphi (x))][T\varphi
(v)f_{2}].
\end{eqnarray*}%
Hence $T\varphi (v)$ is a derivation of $C^{\infty }(R)$ at $\varphi (x)$. 
\end{proof}

\begin{theorem}
For each $x$ in a subcartesian space $S$, every derivation $v$ of $C^{\infty
}(S)$ at $x$ satisies the chain rule. In other words, for every $k\in 
\mathbb{N}$, $f_{1},...,f_{k}\in C^{\infty }(S)$ and $F\in C^{\infty }(%
\mathbb{R}^{k}),$%
\begin{equation}
v[F(f_{1},...,f_{k})]=[\partial
_{1}F(f_{1},...,f_{k})(x)]vf_{1}+ \cdots +[\partial
_{k}F(f_{1},...,f_{k})(x)]vf_{k},  \label{14a}
\end{equation}%
where $\partial _{1},...,\partial _{k}$ are partial derivatives in $\mathbb{R%
}^{k}$.
\end{theorem}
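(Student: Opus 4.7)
The plan is to reduce the chain rule to Leibniz's rule via Hadamard's lemma, letting condition 2 of the differential structure do the rest. Two preliminary facts will be needed: (i) any derivation annihilates constants, since $v(1)=v(1\cdot 1)=2\,v(1)$ forces $v(1)=0$ and linearity extends this to every constant; and (ii) condition 2 guarantees that composites $G(f_1,\ldots,f_k)$ with $G\in C^{\infty}(\mathbb{R}^k)$ again lie in $C^{\infty}(S)$.

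Setting $a=(f_1(x),\ldots,f_k(x))\in\mathbb{R}^k$, I would invoke Hadamard's lemma to produce $G_1,\ldots,G_k\in C^{\infty}(\mathbb{R}^k)$ satisfying $G_i(a)=\partial_i F(a)$ and, for every $y\in\mathbb{R}^k$,
\[
F(y)=F(a)+\sum_{i=1}^k (y_i-a_i)\,G_i(y).
\]
Substituting $y_i=f_i$ and appealing to (ii) turns this into the identity in $C^{\infty}(S)$
\[
F(f_1,\ldots,f_k)=F(a)+\sum_{i=1}^k (f_i-a_i)\,G_i(f_1,\ldots,f_k).
\]

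Next I would apply $v$ to both sides. The constant $F(a)$ is killed by (i). For each summand, Leibniz's rule (\ref{10}) gives
\[
v\bigl[(f_i-a_i)\,G_i(f_1,\ldots,f_k)\bigr]=v(f_i-a_i)\cdot G_i(a)+(f_i(x)-a_i)\cdot v\bigl[G_i(f_1,\ldots,f_k)\bigr],
\]
whose second term vanishes because $f_i(x)=a_i$, while the first collapses to $v(f_i)\,\partial_i F(a)$ using (i) and $G_i(a)=\partial_i F(a)$. Summing over $i$ yields exactly (\ref{14a}).

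The only mild obstacle is bookkeeping: one must check that the functions from Hadamard's lemma are genuinely smooth on all of $\mathbb{R}^k$ (explicitly, $G_i(y)=\int_0^1\partial_i F(a+t(y-a))\,\dee t$) and that the composites $G_i(f_1,\ldots,f_k)$ lie in $C^{\infty}(S)$, which is precisely condition 2. Noticeably, this argument uses neither the subcartesian hypothesis nor Lemmas \ref{Lemma 3.2}--\ref{Lemma 3.3}; Leibniz plus closure of $C^{\infty}(S)$ under composition with smooth functions on $\mathbb{R}^k$ already suffices. An alternative route more in the spirit of the paper would push $v$ forward along the smooth map $(f_1,\ldots,f_k):S\to\mathbb{R}^k$ using Lemma \ref{Lemma 3.3} and invoke the known chain rule on $\mathbb{R}^k$, but this ultimately reduces to the same Hadamard argument.
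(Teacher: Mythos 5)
Your proof is correct, and it takes a genuinely different --- and in fact more general --- route than the paper's. The paper exploits the subcartesian hypothesis: it picks a chart $\varphi :W\rightarrow \varphi (W)\subseteq \mathbb{R}^{n}$ around $x$, uses Lemma \ref{Lemma 3.2} to see that $v$ is determined by restrictions to $W$, pushes $v$ forward along $\iota _{W}$, $\varphi $ and $\iota _{\varphi (W)}$ via Lemma \ref{Lemma 3.3} to obtain a derivation of $C^{\infty }(\mathbb{R}^{n})$ at $\varphi (x)$, extends the relevant functions from $\varphi (W)$ to $\mathbb{R}^{n}$, and then invokes the chain rule for derivations of $C^{\infty }(\mathbb{R}^{n})$, which it takes as known. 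You instead derive the chain rule directly from Leibniz's rule (\ref{10}) and condition 2 of the differential structure by means of Hadamard's lemma --- which is precisely how the ``known'' chain rule on $\mathbb{R}^{n}$ is established in the first place, so you are cutting out the middleman. Each step checks out: the functions $G_{i}(y)=\int_{0}^{1}\partial _{i}F(a+t(y-a))\dee t$ are smooth on all of $\mathbb{R}^{k}$ because $\mathbb{R}^{k}$ is star-shaped about $a$ and one may differentiate under the integral sign; condition 2 places $G_{i}(f_{1},\ldots ,f_{k})$ in $C^{\infty }(S)$, so the Hadamard identity is an identity in $C^{\infty }(S)$; $v$ annihilates constants; and Leibniz's rule applied to $(f_{i}-a_{i})G_{i}(f_{1},\ldots ,f_{k})$ leaves only $\partial _{i}F(a)\,v(f_{i})$ because $f_{i}(x)-a_{i}=0$. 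What your approach buys is generality: the conclusion (\ref{14a}) holds for derivations at a point of an arbitrary differential space, with no subcartesian or even Hausdorff hypothesis, and with no appeal to charts or extension of functions. What the paper's approach buys is economy within its own narrative: the localization lemma and the pushforward $T\varphi $ are needed anyway for the tangent bundle and the integration of derivations in the later sections, so once that machinery is in place the chain rule follows by transport of structure.
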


\begin{proof}
Since $S$ is subcartesian, there exists a diffeomorphism $\varphi
:W\rightarrow \varphi (W)\subseteq \mathbb{R}^{n},$ where $W$ is an open
neighbourhood of $x$ in $S$. Let $\iota _{W}:W\rightarrow S$ be the
inclusion map. For every $f\in C^{\infty }(S)$, $\iota _{W}^{\ast }f=f|_W \in C^{\infty }(W)$. 
By lemma \ref{Lemma 3.2}, $vf$ is completely
determined by $f|_W$, and equation (\ref{13a}) yields 
\begin{equation}
vf=T\iota _{W}(v)(\iota _{W}^{\ast }f)=T\iota _{W}(v)(f|_W).
\label{15a}
\end{equation}%
Since $\varphi :W\rightarrow \varphi (W)\subseteq \mathbb{R}^{n}$ is a
diffeomorphism, $h=(\varphi ^{-1})^{\ast }f|_W \in 
C^{\infty }(\varphi (W))$ and 
\begin{equation}
(T\iota _{W}(v))f_{\mid W}=[T\varphi (Ti_{W}(v))]h.  \label{16a}
\end{equation}%
Let $\iota _{\varphi (W)}:\varphi (W)\rightarrow \mathbb{R}^{n}$ be the
inclusion map. Since $T\varphi (Ti_{W}(v))$ is a derivation of 
$C^{\infty}(\varphi (W))$ at $\varphi (x)$, $T\iota _{\varphi (W)}(T\varphi (Ti_{W}(v)))$ 
is a derivation of $C^{\infty }(\mathbb{R}^{n})$ at $\varphi (x).$ Without loss of generality, we may assume that the function $h$ in equation (\ref{16a}) is the restriction to $\varphi (W)$ of a function $H\in C^{\infty }(\mathbb{R}^{n})$. Therefore, 
\begin{equation}
\lbrack T\varphi (Ti_{W}(v))]h=[T\iota _{\varphi (W)}(T\varphi
(Ti_{W}(v)))]H.  \label{17a}
\end{equation}

Derivations of $C^{\infty }(\mathbb{R}^{n})$ satisfy the chain rule. If 
$H=F(H_{1},...,H_{k})$, for some $k\in \mathbb{N}$, $H_{1},...,H_{k}\in
C^{\infty }(\mathbb{R}^{n})$ and $F\in C^{\infty }(\mathbb{R}^{k})$, then
equation (\ref{12a}) yields 
\begin{align}
[T\iota _{\varphi (W)}(T\varphi (T{\iota}_{W}(v)))]F(H_{1},...,H_{k}) & = \notag \\
&\hspace{-1.75in} = [\partial _{1}F(H_{1},...,H_{k})(\varphi (x))]
[T\iota _{\varphi (W)}(T\varphi (T{\iota}_{W}(v)))]H_{1}+  \label{ 18a} \\
&\hspace{-1.5in} \cdots +[\partial _{k}F(H_{1},...,H_{k})(\varphi (x))][T{\iota }_{\varphi (W)}
(T\varphi (T{\iota }_{W}(v)))]H_{k}.  \notag 
\end{align}
Therefore, 
\begin{align}
vF(f_{1},...,f_{k}) & =[T{\iota }_{\varphi (W)}(T\varphi
(T{\iota }_{W}(v)))]F(H_{1},...,H_{k})  \notag  \\
&\hspace{-.5in} =[\partial _{1}F(H_{1},...,H_{k})(\varphi (x))][T\iota _{\varphi (W)}
(T\varphi (T{\iota }_{W}(v)))]H_{1}+  \label{19a} \\
&\cdots +[\partial _{k}F(H_{1},...,H_{k})(\varphi (x))][T{\iota }_{\varphi (W)}
(T\varphi (T{\iota}_{W}(v)))]H_{k},  \notag
\end{align}%
where $H_i|_{\varphi (W)}=(\varphi ^{-1})^{\ast }f_i |_W$ for $i=1,\ldots ,k$. 
\end{proof}

\section{The tangent bundle}

Let $T_{x}S$ be the set of all derivations of $C^{\infty }(S)$ at $x\in S$. The set 
$T_xS$ is a real vector space, which is interpreted to be the tangent \emph{tangent space}
to $S$ at $x$. Let $TS $ be the disjoint union of tangent spaces to $S$ at each point $x$ of $S$. In other words,%
\[
TS=\raisebox{-4pt}{\mbox{\LARGE $\amalg $}}_{x\in S}T_{x}S = 
\bigcup_{x \in S}\big( \{ x \} \times T_xS \big) 
\] %
The \emph{tangent bundle projection} is the map 
$\tau :TS\rightarrow S: v = (x,v_x) \rightarrow x$, 
which assigns to each derivation $v_{x}\in TS$ at $x$ the point $x\in S$. The tangent bundle projection enables us to omit the subscript $x$ in the
definition of derivation at a point, and rewrite equation (\ref{10}) in the
form
\begin{equation}
v(f_{1}f_{2})=v(f_{1})f_{2} +f_{1}v(f_2).  \label{11}
\end{equation}

Each function $f\in C^{\infty }(S)$ gives rise to two functions on $TS$, namely, 
\begin{displaymath}
\tau ^{\ast }f :TS\rightarrow \mathbb{R}:v\mapsto f(\tau (v)) 
\end{displaymath}
and 
\begin{displaymath}
\mathrm{d}f :TS\rightarrow \mathbb{R}:v\mapsto \mathrm{d}f(v)=v(f).
\end{displaymath}

The \emph{tangent bundle }of a differential space\ $S$
is $TS$ with differential structure $C^{\infty }(TS)$ generated by the family of functions 
$\{\tau ^{\ast }f, \, \mathrm{d}f \setrule \,  f\in C^{\infty }(S)\}$. This definition of $C^{\infty }(TS)$ ensures that the tangent bundle projection $\tau :TS\rightarrow S$ is smooth. 
The \emph{derived map} of a smooth map $\varphi :S\rightarrow
R$ is $T\varphi :TS\rightarrow TR:v\mapsto T\varphi (v)$, where for
every $f\in C^{\infty }(R)$ $\lbrack T\varphi (v)]f=v(\varphi ^{\ast }f)$, 
see lemma \ref{Lemma 3.3}. If $\tau _{S}:TS\rightarrow S$ and $\tau _{R}:TR\rightarrow R$ are tangent bundle projections, then 
\begin{equation}
\tau _{R}\comp T\varphi =\varphi \comp \tau _{R}.  \label{14}
\end{equation}

\section{Global derivations}

\label{Definition 3.7}A \emph{derivation} of $\mathcal{C}^{\infty }(S)$ is a linear
map $X:\mathcal{C}^{\infty }(S)\rightarrow \mathcal{C}^{\infty }(S):f\mapsto X(f)$ 
satisfying Leibniz's rule%
\begin{equation}
X(f_{1}f_{2})=X(f_{1})f_{2}+f_{1}X(f_{2})  \label{Leibniz}
\end{equation}%
for every $f_{1},f_{2}\in \mathcal{C}^{\infty }(S)$. Let $\mathrm{Der\,}\mathcal{C}^{\infty }(S)$ be the space of derivations of $\mathcal{C}^{\infty }(S)$. It has the structure of a Lie
algebra with the Lie bracket $[X_{1},X_{2}]$ defined by%
\[
\lbrack X_{1},X_{2}](f)=X_{1}(X_{2}(f))-X_{2}(X_{1}(f)) 
\]%
for every $X_{1},X_{2}\in \mathrm{Der\,}\mathcal{C}^{\infty }(S)$ and $f\in 
\mathcal{C}^{\infty }(S).$ Moreover, $\mathrm{Der\,}\mathcal{C}^{\infty }(S)$
is a module over the ring $\mathcal{C}^{\infty }(S)$ and 
\[
\lbrack
f_{1}X_{1},f_{2}X_{2}]=f_{1}f_{2}[X_{1},X_{2}]+f_{1}X_{1}(f_{2})X_{2}-f_{2}X_{2}(f_{1})X_{1} 
\]%
for every $X_{1},X_{2}\in \mathrm{Der\,}\mathcal{C}^{\infty }(S)$ and $%
f_{1},f_{2}\in \mathcal{C}^{\infty }(S)$. If $X$ is a derivation of $\mathcal{C}^{\infty }(S)$,\ then, for every $x\in S$ we have a derivation $X(x)$ of $\mathcal{C}^{\infty }(S)$ at $x$ given
by 
\begin{equation}
X(x):\mathcal{C}^{\infty }(S)\rightarrow \mathbb{R}:f\mapsto X(x)f=(Xf)(x).
\label{X(x)}
\end{equation}%
The derivation $X(x)$ (\ref{X(x)}) is called the \emph{value} of $X$ at $x$. Clearly, the
derivation $X$ is uniquely determined by the collection $\{X(x) \setrule \,  x\in S\}$
of its values at all points of $S$. In order to avoid confusion between
a derivation of $\mathcal{C}^{\infty }(S)$ and a derivation of 
$\mathcal{C}^{\infty }(S)$ at a point in $S$, we shall often refer to the former as a 
\emph{global derivation} of $\mathcal{C}^{\infty }(S)$.

\begin{theorem}
\label{Theorem 3.4a}Let $S$ be a differential subspace of $\mathbb{R}^{n}$
and $X$ a derivation of $\mathcal{C}^{\infty }(S)$. For each $x\in
S\subseteq \mathbb{R}^{n}$, there exists a neighbourhood $U$ of $x$ in $%
\mathbb{R}^{n}$ and a vector field $Y$ on $\mathbb{R}^{n}$ such that%
\[
X(F|_S)|_{U\cap S}=(Y(F))|_{U\cap S} 
\]%
for every $F\in \mathcal{C}^{\infty }(\mathbb{R}^{n}).$
\end{theorem}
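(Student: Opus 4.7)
The plan is to construct $Y$ explicitly using the standard coordinate functions on $\mathbb{R}^{n}$. Let $x^{1},\ldots ,x^{n}$ denote these coordinates and set $f_{i}=x^{i}|_{S}\in \mathcal{C}^{\infty }(S)$. Then the functions $Xf_{i}$ lie in $\mathcal{C}^{\infty }(S)$ as well. Since $S$ is a differential subspace of $\mathbb{R}^{n}$, each $Xf_{i}$ agrees, on some neighbourhood $U_{i}\cap S$ of $x$ in $S$, with the restriction of a function $A_{i}\in \mathcal{C}^{\infty }(\mathbb{R}^{n})$. Taking a bump function $\chi \in \mathcal{C}^{\infty }(\mathbb{R}^{n})$ supported in $\bigcap _{i}U_{i}$ and identically $1$ on a smaller neighbourhood $U$ of $x$, I set $B_{i}=\chi A_{i}$ and define the vector field
\begin{equation*}
Y=\sum_{i=1}^{n}B_{i}\,\frac{\partial }{\partial x^{i}}
\end{equation*}
on all of $\mathbb{R}^{n}$. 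By construction $B_{i}(y)=(Xf_{i})(y)$ for every $y\in U\cap S$.

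To verify that $X(F|_{S})|_{U\cap S}=(YF)|_{U\cap S}$ for every $F\in \mathcal{C}^{\infty }(\mathbb{R}^{n})$, I observe that $F|_{S}=F(f_{1},\ldots ,f_{n})$ as functions on $S$, since $f_{i}(y)=y^{i}$ for $y\in S$. Fixing $y\in U\cap S$, the value $X(F|_{S})(y)$ equals $X(y)(F(f_{1},\ldots ,f_{n}))$, where $X(y)$ is the derivation at $y$ defined by (\ref{X(x)}). Applying the chain rule proved in the previous theorem to $X(y)$ gives
\begin{equation*}
X(y)\bigl(F(f_{1},\ldots ,f_{n})\bigr)=\sum_{i=1}^{n}\partial _{i}F(y)\,X(y)(f_{i})=\sum_{i=1}^{n}\partial _{i}F(y)\,(Xf_{i})(y),
\end{equation*}
which equals $\sum _{i}B_{i}(y)\,\partial _{i}F(y)=(YF)(y)$ by the choice of the $B_{i}$ on $U\cap S$. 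This gives the desired identity.

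The main substantive input is the chain rule for derivations at a point; without it, one could only handle polynomial $F$, which is far from enough to conclude the statement for arbitrary $F\in \mathcal{C}^{\infty }(\mathbb{R}^{n})$. The remaining work is bookkeeping: justifying the local extension of each $Xf_{i}$ to a function on $\mathbb{R}^{n}$, which is immediate from the definition of a differential subspace, and globalising the coefficient functions by a bump function so that $Y$ is defined on all of $\mathbb{R}^{n}$ rather than merely on $U$.
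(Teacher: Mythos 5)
Your proof is correct and follows essentially the same route as the paper: both reduce $X(F|_{S})(y)$ to $\sum_{i}\partial_{i}F(y)\,(X(x^{i}|_{S}))(y)$ and then locally extend the coefficient functions $X(x^{i}|_{S})$ to smooth functions on $\mathbb{R}^{n}$, the only cosmetic difference being that you invoke the pointwise chain rule on $S$ directly while the paper first transfers $X(y)$ to a derivation of $\mathcal{C}^{\infty}(\mathbb{R}^{n})$ at $y$ and uses the classification of such derivations (which is the same computation). Your bump function is harmless but unnecessary, since the extensions $A_{i}$ are already globally defined elements of $\mathcal{C}^{\infty}(\mathbb{R}^{n})$ and one may simply take $U=\bigcap_{i}U_{i}$.
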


\begin{proof}
Let $Z$ be a derivation of $\mathcal{C}^{\infty }(S)$ at $x\in S\subseteq 
\mathbb{R}^{n}.$ For each $F\in \mathcal{C}^{\infty }(\mathbb{R}^{n})$ the
restriction $F|_S$ of $F$ to $S$ is in $\mathcal{C}^{\infty }(S)$. It
is easy to see that the map 
$\mathcal{C}^{\infty }(\mathbb{R}^{n})\rightarrow \mathbb{R}:F\mapsto Z(F|_S)$ is a derivation at $x$ of $\mathcal{C}^{\infty }(\mathbb{R}^{n})$.\smallskip

We denote the natural coordinate functions on $\mathbb{R}^{n}$ by 
$x^{1},...,x^{n}:\mathbb{R}^{n}\rightarrow \mathbb{R}$. Every derivation $Y$
of $\mathcal{C}^{\infty }(\mathbb{R}^{n})$ is of the form 
$\sum_{i=1}^{n}F^{i}\frac{\partial }{\partial x^{i}},$ where $F^{i}=Y(x^{i})$
for $i=1,...n.$ Let $X$ be a derivation of $\mathcal{C}^{\infty }(S)$ and 
$F\in \mathcal{C}^{\infty }(\mathbb{R}^{n})$. For each $x\in S$, the
derivation $X(x)$ of $\mathcal{C}^{\infty }(S)$ at $x$ gives a derivation of 
$\mathcal{C}^{\infty }(\mathbb{R}^{n})$ at $x$. Hence, 
\begin{align}
X(F|_S )(x) &= X(x)(F|_S )=
\sum_{i=1}^{n}\frac{\partial F}{\partial x^{i}}(x)(X(x)(x^i|_S )) \notag \\
&=\sum_{i=1}^{n}\frac{\partial F}{\partial x^{i}}(x)(X(x^i|_S))(x) = 
\big( \sum_{i=1}^{n}X(x^i|_S) \frac{\partial F}{\partial x^{i}}\big) (x) \notag
\end{align}%
for every $x\in S$. For $i=1,...,n,$ the coefficients $X(x^i|_S)$ are in 
$\mathcal{C}^{\infty }(S)$. Since $S$ is a differential subspace of $\mathbb{R}^{n}$,
for each $x\in S$ there exists a neighbourhood $U$ of $x$ in $\mathbb{R}^{n}$
and functions $F^{1},...,F^{n}\in \mathcal{C}^{\infty }(\mathbb{R}^{n})$
such that $X(x^i|_S)|_{U\cap S}=F^i |_{U\cap S}$ for each $i=1,\ldots ,n$. Hence, 
\[
X(F|_S)|_{U\cap S}=
\left( \sum_{i=1}^{n}F^{i}\frac{\partial F}{\partial x^{i}}\right) \rule[-10pt]{.5pt}{24pt}
\raisebox{-9pt}{$\, \scriptscriptstyle U\cap S$} .
\]%
Since $F^{1},...,F^{n}$ are smooth functions on $\mathbb{R}^{n}$, it follows
that $Y=\sum_{i=1}^{n}F^{i}\frac{\partial }{\partial x^{i}}$ is a vector
field on $\mathbb{R}^{n}.$\smallskip 
\end{proof}

We can rephrase theorem \ref{Theorem 3.4a} by saying that every derivation
on a differential subspace $S$ of $\mathbb{R}^{n}$ can be locally extended
to a vector field on $\mathbb{R}^{n}$. Suppose that $S$ is closed. In this
case, we can use a partition of unity on $\mathbb{R}^{n}$ to extend every
derivation of $\mathcal{C}^{\infty }(S)$ to a global vector field on $%
\mathbb{R}^{n}$. \medskip 

A \emph{section} of the tangent bundle projection $\tau :TS\rightarrow S$ is a
smooth map $\xi :S\rightarrow TS$ such that $\tau \comp \xi =\mathrm{id}_{S}$. 
Let $\mathcal{S}^{\infty }(TS)$ be the space of
sections of the tangent bundle projection $\tau :TS\rightarrow S$. Since the differential structure $C^{\infty }(TS)$ is generated by the collection of functions 
$\{\tau ^{\ast }f,\dee f \setrule \,  f\in C^{\infty}(S)\}$, it follows that a section $\xi :S\rightarrow TS$ has to satisfy the conditions that $\xi ^{\ast }(\tau ^{\ast }f)$ and 
$\xi ^{\ast }(\mathrm{d}f) $ are in $C^{\infty }(S)$ for every $f\in C^{\infty }(S).$ The first
condition holds automatically because 
\begin{equation}
\xi ^{\ast }(\tau ^{\ast }f)=(\tau ^{\ast }f)\comp \xi =f\comp \tau \comp \xi 
=f \comp \mathrm{id}_{S}=f.  \label{0.4}
\end{equation}%
On the other hand, for $x\in S$,  
\begin{equation}
(\xi ^{\ast }(\dee f))(x)=((\dee f)\comp \xi )(x)= 
\langle \dee f \mid \xi (x) \rangle =\xi (x)f.  \label{0.5}
\end{equation}%

\begin{proposition}
\label{Proposition 3.7}Every global derivation $X$ of $C^{\infty }(S)$
defines a section 
\begin{equation}
X:S\rightarrow TS:x\mapsto X(x),   \label{0.6}
\end{equation}%
where $X(x)f=(Xf)(x)$ for every $f\in C^{\infty }(S)$ and every $x \in S$. 
\end{proposition}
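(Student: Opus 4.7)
\bigskip

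\textbf{Proof plan.} The plan is to verify two things: first, that the assignment $x \mapsto X(x)$ is a set-theoretic section of $\tau$, and second, that it is smooth as a map of differential spaces $S \to TS$.

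The section condition $\tau \comp X = \mathrm{id}_S$ is immediate: by construction $X(x)$ is a derivation of $C^\infty(S)$ at the point $x$, hence $X(x) \in T_xS$, and the tangent bundle projection sends every element of $T_xS$ to $x$. So the only substantive content is smoothness.

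To prove smoothness I would invoke the description of $C^{\infty }(TS)$ as the differential structure \emph{generated} by the family $\mathcal{F} = \{\tau^{\ast}f,\, \dee f \setrule f\in C^{\infty }(S)\}$. A standard observation, which I would state at the start, is that it suffices to check that the pullbacks $X^{\ast}(\tau^{\ast}f)$ and $X^{\ast}(\dee f)$ lie in $C^{\infty }(S)$ for every $f\in C^{\infty }(S)$: pullback under a fixed map commutes with superposition $F(g_1,\dots,g_n)$ and with the localization condition 3, so if the pullbacks of the generators are smooth, then so are the pullbacks of every function obtained from them by the two closure operations. Granting this, I use the computations already performed in equations (\ref{0.4}) and (\ref{0.5}): $X^{\ast}(\tau^{\ast}f) = f\in C^{\infty }(S)$, and $(X^{\ast}(\dee f))(x) = X(x)f = (Xf)(x)$, so $X^{\ast}(\dee f) = Xf$, which lies in $C^{\infty }(S)$ precisely because $X$ is a global derivation (it maps $C^{\infty }(S)$ into $C^{\infty }(S)$).

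The main obstacle, and the only step that is not a direct rewriting of earlier formulas, is the justification that smoothness of a map into a differential space can be tested on a generating family of the target's differential structure. I would either cite this (e.g.\ from \cite{sniatycki 2013}) or record it briefly: the collection $\mathcal{G} = \{g \in C^{\infty }(TS) \setrule X^{\ast}g \in C^{\infty }(S)\}$ contains $\mathcal{F}$, is closed under superposition with smooth functions on $\mathbb{R}^n$ (because pullback commutes with such superpositions), and is closed under the localization condition 3 (because a local agreement on $TS$ pulls back to a local agreement on $S$, and $C^{\infty }(S)$ is itself localizable). Hence $\mathcal{G}$ contains the whole differential structure generated by $\mathcal{F}$, which is $C^{\infty }(TS)$, and $X : S \to TS$ is smooth.
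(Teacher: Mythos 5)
Your proof is correct and follows essentially the same route as the paper's: both reduce the statement to the two identities $X^{\ast }(\tau ^{\ast }f)=f$ and $X^{\ast }(\dee f)=Xf\in C^{\infty }(S)$ already recorded in equations (\ref{0.4}) and (\ref{0.5}), the latter holding precisely because $X$ is a global derivation. The only difference is that you make explicit the lemma that smoothness into $TS$ can be tested on the generating family $\{\tau ^{\ast }f,\dee f\}$, which the paper asserts without proof in the paragraph preceding the proposition; your sketch of that lemma is sound, granted the standard observation that the continuity of $X$ needed in the localization step already follows from the superposition-closed stage, since those functions generate the topology of $TS$.
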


\begin{proof}
The section $X:S\rightarrow TS$, defined by equation (\ref{0.6}), satisfies
equation (\ref{0.5}) because $X^{\ast }(\mathrm{d}f)=X(f)\in \mathrm{Der}\, C^{\infty }(S)$ by definition of a global derivation. Conversely, if $\xi :S\rightarrow TS$ is a section, then equation (\ref{0.5}) implies that $\xi (f)=\xi ^{\ast }(\mathrm{d}f)\in \mathrm{Der}\, C^{\infty }(S)$ for every 
$f\in C^{\infty }(S)$. Hence, $\xi :f\mapsto \xi (f)$ is a global derivation of $C^{\infty }(S)$.
\end{proof}\smallskip

Equation (\ref{0.6}) gives a bijection between the space $\mathcal{S}^{\infty }(TS)$ of sections of the tangent bundle projection and the space $\mathrm{Der}\, C^{\infty }(S)$. hence
proposition \ref{Proposition 3.7} leads to identification of global
derivations of $\mathcal{C}^{\infty }(S)$ with the correspnding sections of
the tangent bundle.$\smallskip $

Let $c:I\rightarrow S$ be a smooth map of an interval $I$ in $\mathbb{R}$ containing $0$ to
a differential space $S$. We say that $c$ is an \emph{integral curve} of a
derivation $X$ of $\mathcal{C}^{\infty }(\mathbb{R})$ \emph{starting at} $x_{0}\in S$ if $x_{0}=c(0)$ and 
\begin{equation}
\frac{\dee }{\dee t}f(c(t))=X(f)(c(t))  \label{curve}
\end{equation}%
for every $f\in \mathcal{C}^{\infty }(S)$ and every $t\in I$. In other
words, $c:I\rightarrow S$ is an integral curve of $X$ if $Tc(t)=X\comp c(t)$
for every $t\in I$. In the case of subcartesian spaces, we have to allow the interval $I$ to be a single point in $\mathbb{R}$. In other words, the
tangent vector $X(x_{0})\in T_{x_{0}}S$ may be considered as an integral
curve $c:\{ 0 \}\rightarrow S: 0 \mapsto x_{0}$ of $X$ starting at $x_{0}$. \medskip

Integral curves of a given derivation $X$ of $\mathcal{C}^{\infty }(S)$ can
be ordered by inclusion of their domains. In other words, if $%
c_{1}:I_{1}\rightarrow S$ and $c_{2}:I_{2}\rightarrow S$ are two integral
curves of $X$ and $I_{1}\subseteq I_{2}$, then $c_{1}\preceq c_{2}$. An 
integral curve $c_{1}:I\rightarrow S$ of $X$ is \emph{maximal} if $c_{1}\preceq
c_{2}$ implies that $c_{1}=c_{2}$.\medskip 

\noindent \textbf{Example 7} Let $\mathbb{Q}$ be the set of rational numbers in 
$\mathbb{R}$. Then $\mathcal{C}^{\infty }(\mathbb{Q})$ consists of restrictions to $\mathbb{Q}$
of smooth functions on $\mathbb{R}$. Since $\mathbb{Q}$ is dense in $\mathbb{R}$, it follows that every function $f\in \mathcal{C}^{\infty }(\mathbb{Q})$
extends to a unique smooth function on $\mathbb{R}$ and every derivation of 
$\mathcal{C}^{\infty }(\mathbb{R})$ induces a derivation of 
$\mathcal{C}^{\infty }(\mathbb{Q})$. Let $X$ be the derivation of 
$\mathcal{C}^{\infty }(\mathbb{Q})$ induced by the derivative $\frac{\dee }{\dee x}$ on 
$\mathcal{C}^{\infty }(\mathbb{R})$. In other words, for every $f\in \mathcal{C}^{\infty
}(\mathbb{Q})$ and every $x_{0}\in \mathbb{Q},$ 
\[
(Xf)(x_{0})=\lim_{x\rightarrow x_{0}}\frac{f(x)-f(x_{0})}{x - x_{0}}, 
\]%
where the limit is taken over $x\in \mathbb{Q}$. Since no two distinct
points in $Q$ can be connected by a continuous curve, it follows that for each 
$x\in \mathbb{Q}$, the tangent vector $X(x)\in T_{x}\mathbb{Q}$ is the
maximal integral curve of $X$ through $x$.\quad \mbox{\tiny $\blacksquare $} \medskip 

\noindent \textbf{Definition} Let $\tau :TS\rightarrow S$ be the tangent bundle projection. Let $X$ be a derivation of the differential structure $\mathcal{C}^{\infty }(S)$ of a subcartesian space $S$. Let $x_{0}$ be a point in $S$, and $I$ be an interval in $\mathbb{R}$ containing $0\in 
\mathbb{R}$ or $I=\{0\}$. A \emph{lifted integral curve} of $X$ \emph{starting at} $x_{0}$ is a map 
$\gamma :I\rightarrow TS$ such that $\gamma (0)=X(x_{0})$ and 
\begin{equation}
\frac{\dee }{\dee t}f \big( \tau (\gamma (t))\big)=X(f)\big( \tau (\gamma (t)) \big)
\label{integral curve} 
\end{equation}
for every $f\in C^{\infty }(S)$ and $t\in I$, if $I\neq \{0\}$. \medskip 

\noindent If $I\neq \{0\}$, then setting $c=\tau \comp \gamma $ we recover the definition for an integral curve of a derivation given in equation (\ref{curve}). If $I = \{ 0 \}$, then $\gamma $ is 
a lifted integral curve of $X$ starting at $x_0$, because $\gamma (0) = X(c(0)) = X(x_0)$. 
Our extension of this definition to subcartesian spaces
requires lifting the curve $c:I\rightarrow S$ to the tangent bundle leading
to $\gamma :I\rightarrow TS$, in order to make sense of the condition 
$\gamma (0)=X(c(0))$. 
\addtocounter{theorem}{1}

\begin{theorem} Let $S$ be a subcartesian space and let $X$ be a derivation
of $\mathcal{C}^{\infty }(S)$. For every $x\in S$, there exists a unique maximal integral curve 
${\gamma }_x$ of $X$ starting at $x$. 
\end{theorem}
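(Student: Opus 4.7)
The plan is to reduce to the classical existence-uniqueness theorem for ODEs on $\mathbb{R}^{n}$ via a chart, using Theorem~\ref{Theorem 3.4a} to realise $X$ locally as an ordinary smooth vector field on Euclidean space. Given $x\in S$, I would choose a chart $\varphi :W\to \varphi(W)\subseteq \mathbb{R}^{n}$ at $x$ and apply Theorem~\ref{Theorem 3.4a} to the push-forward derivation of $\mathcal{C}^{\infty }(\varphi(W))$, obtaining an open neighbourhood $U$ of $\varphi(x)$ in $\mathbb{R}^{n}$ and a vector field $Y$ on $\mathbb{R}^{n}$ which represents $\varphi_{\ast}X$ on $U\cap \varphi(W)$. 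The candidate maximal integral curve of $X$ through $x$ will then be assembled from pull-backs $\varphi^{-1}\comp \tilde c$ of integral curves $\tilde c$ of $Y$, taken on the largest subinterval of the $\tilde c$-domain along which $\tilde c$ stays in $U\cap \varphi(W)$; should this subinterval collapse to $\{0\}$, the one-point map $\gamma:\{0\}\to TS$, $0\mapsto X(x)$, already qualifies as an integral curve by definition, so existence is automatic.

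The heart of the argument is uniqueness. Suppose $\gamma_{1},\gamma_{2}$ are integral curves of $X$ starting at $x$ with base curves $c_{i}=\tau \comp \gamma_{i}:I_{i}\to S$, and let $A=\{t\in I_{1}\cap I_{2}:c_{1}(t)=c_{2}(t)\}$. Then $0\in A$, and because the $c_{i}$ are continuous and $S$ is Hausdorff, $A$ is closed in $I_{1}\cap I_{2}$. To show $A$ is open, I would pick $t_{0}\in A$, take a chart $\varphi$ around the common value $c_{1}(t_{0})=c_{2}(t_{0})$, and verify that the pushed-forward curves $\tilde c_{i}=\varphi \comp c_{i}$ satisfy $\frac{\dee}{\dee t}F(\tilde c_{i}(t))=Y(F)(\tilde c_{i}(t))$ for every $F\in \mathcal{C}^{\infty }(\mathbb{R}^{n})$. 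This step combines Lemma~\ref{Lemma 3.2} (so that only the local behaviour of functions near $c_{i}(t)$ matters, allowing one to test the defining relation against $f\in \mathcal{C}^{\infty }(S)$ agreeing locally with $F\comp \varphi$) with Theorem~\ref{Theorem 3.4a} (to identify $X(f)(c_{i}(t))$ with $Y(F)(\tilde c_{i}(t))$). Classical uniqueness for the ODE $\dot z=Y(z)$ on $\mathbb{R}^{n}$ then forces $\tilde c_{1}=\tilde c_{2}$ near $t_{0}$, hence $c_{1}=c_{2}$ near $t_{0}$. Connectedness of the interval $I_{1}\cap I_{2}$ gives $A=I_{1}\cap I_{2}$, and the relation $\gamma_{i}(t)=X(c_{i}(t))$ upgrades this to $\gamma_{1}=\gamma_{2}$ on the common domain.

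With uniqueness in hand, maximality is routine: the family $\mathcal{F}$ of all integral curves of $X$ starting at $x$ is non-empty and, by uniqueness, directed by domain inclusion, so the union $I_{x}=\bigcup_{\gamma\in \mathcal{F}}I_{\gamma}$ is an interval (or $\{0\}$) containing $0$ on which the pointwise-assembled map $\gamma_{x}:I_{x}\to TS$ is a well-defined integral curve; by construction it is maximal, and uniqueness ensures it is the unique such curve.

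The main obstacle is the bridge in the second paragraph: verifying cleanly that an $X$-integral curve in $S$ pushes forward through a chart to an ordinary $Y$-integral curve in $\mathbb{R}^{n}$, so that classical ODE uniqueness becomes applicable. Theorem~\ref{Theorem 3.4a} is exactly what makes this transfer possible — and Theorem~\ref{Theorem 3.4a} itself rested on the chain rule proved in Section~3 — so the present theorem is effectively the capstone of the logical chain: chain rule $\Rightarrow $ local vector-field extension $\Rightarrow $ ODE correspondence under charts $\Rightarrow $ existence and uniqueness of maximal integral curves.
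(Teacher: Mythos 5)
Your proposal is correct, and the existence step (chart, push the derivation forward, extend to a genuine vector field on $\mathbb{R}^{n}$ via Theorem~\ref{Theorem 3.4a}, solve the classical ODE, restrict to the connected component of the preimage of the chart image containing $0$, allow the degenerate case $I=\{0\}$) is essentially the paper's part (i). Where you genuinely diverge is in the order and mechanism of uniqueness and maximality. You prove global uniqueness first, by an open--closed argument on the agreement set $A=\{t: c_{1}(t)=c_{2}(t)\}$ (closed by continuity and the Hausdorff hypothesis, open by pushing both curves through a common chart and invoking classical ODE uniqueness for the same local representative $Y$), and then obtain the maximal curve for free as the union of all integral curves starting at $x$. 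The paper instead builds the maximal curve constructively in its part (iv) by repeatedly restarting the local construction at $x_{1}=\lim_{t\nearrow q}c_{x}(t)$ and concatenating, and then proves global uniqueness separately in part (v) via a greatest-lower-bound argument on the set where two maximal curves disagree. Your route is the standard manifold-theoretic one and is arguably cleaner: it avoids the bookkeeping of the concatenation (smoothness of the glued curve at the junction, the ``continuing this process'' step, the case analysis on whether the limit exists), all of which the paper must handle explicitly; what the paper's version buys is a concrete description of how and why the maximal domain terminates. Two small points you should make explicit if you write this up: (a) the bridge step needs a bump function (lemma 2.2.1 of the cited book, used in Lemma~\ref{Lemma 3.2}) to realise each coordinate function $\varphi^{\ast}x^{j}$ near $c_i(t_0)$ as a globally defined element of $\mathcal{C}^{\infty}(S)$ before the integral-curve identity can be applied to it; and (b) the assembled union curve must be checked to be smooth as a map into $TS$, which follows from the locality axiom of differential structures together with the smoothness of the section $X:S\rightarrow TS$ (the paper isolates this as its part (ii)). Neither is a gap in the idea, only in the write-up.
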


\begin{proof}
\par \noindent i) \textbf{Local existence.} For $x\in S$, let $\varphi $ be a
diffeomorphism of a neighbourhood $V$ of $x$ in $S$ onto a differential
subspace $R$ of $\mathbb{R}^{n}$. Let $Z=\varphi _{\ast }X|_V$ be a
derivation of $\mathcal{C}^{\infty }(R)$ obtained by pushing forward the
restriction of $X$ to $V$ by $\varphi $. In other words, 
\begin{equation}
Z(f)\comp \varphi =X|_V (f\comp \varphi ) 
\label{19}
\end{equation}%
for all $f\in \mathcal{C}^{\infty }(R)$. Without loss of generality, we may
assume that there is an extension of $Z$ to a vector field $Y$ on 
${\R }^{n}$.\smallskip 

Let $z=\varphi (x)$ and let $c_{0}$ be a standard integral curve in $\mathbb{R}^{n}$ of
the vector field $Y$ such that $c_{0}(0)=z$. Let $I_{x}$ be the connected
component of $c_{0}^{-1}(R)$ containing $0$ and let $c:I_{x}\rightarrow R$ the
curve in $R$ obtained by the restriction of $c_{0}$ to $I_{x}$. Clearly, $c(0)=z$. 
If $I_{x}\neq \{0\}$, then for each $t_{0}\in I_{x}$ and each $f\in 
\mathcal{C}^{\infty }(R)$ there exists a neighbourhood $U$ of $c(t_{0})$ in 
$R$ and a function $F\in \mathcal{C}^{\infty }(\mathbb{R}^{n})$ such that 
$f|_U =F|_U$ and 
\begin{align}
\frac{\dee }{\dee t}\rule[-8pt]{.5pt}{20pt}\raisebox{-7pt}{$\, \scriptscriptstyle t = t_0$} 
\hspace{-7pt}f(c(t)) &= 
\frac{\dee}{\dee t}\rule[-8pt]{.5pt}{20pt}\raisebox{-7pt}{$\, \scriptscriptstyle t = t_0$} \hspace{-7pt} F(c(t)) =Y(F)(c(t_{0})) \notag \\
&= Y(F)|_U( c(t_{0}) )=Z(f)(c(t_{0})). \notag 
\end{align}%
Since $I_{x}\neq \{0\}$ is a connected subset of $\mathbb{R}$ containing $0$, it is an interval. 
So $c_{x}=\varphi ^{-1}\comp c:I_{x}\rightarrow
V\subseteq S$ satisfies $c_{x}(0)=\varphi ^{-1}(c(0))=\varphi ^{-1}(z)=x$.
Moreover, for every $t\in I_{x}$ and $h\in \mathcal{C}^{\infty }(S)$, we get 
$f=h\comp \varphi ^{-1}\in \mathcal{C}^{\infty }(R)$ and 
\begin{align}
\frac{\dee }{\dee t}h\big( c_{x}(t) \big) &=\frac{\dee }{\dee t}h\big(\varphi ^{-1}\big( c(t)) \big)= 
\frac{\dee}{\dee t}(h\comp \varphi ^{-1})\big( c(t) ) \notag \\
&=\frac{\dee}{\dee t}f(c(t)) = Z(f)(c(t)) \notag \\
&= Z(h\comp \varphi ^{-1})(\varphi \comp c_{x}(t))=X(h)\big( c_{x}(t) \big) \notag .
\end{align}%
This implies that the map ${\gamma }_x:I_{x}\rightarrow TS:t\mapsto X(c_{x}(t))$ 
is a lifted integral curve of $X$ starting at $x$ if $I_{x}\neq \{0\}$. It is also
an integral curve of $X$ starting at $x$ when $I_{x}=\{0\}$, because 
${\gamma}_x (0)=X(x)$. \smallskip 

\noindent ii) \textbf{Smoothness.} From the theory of differential
equations it follows that the integral curve $c_{0}$ in ${\R}^{n}$ of a smooth
vector field $Y$ is smooth. Hence, $c=c_0|_{I_x}$ is smooth. Since 
$\varphi $ is a diffeomorphism of a neighbourhood of $x$ in $S$ to $R$, its
inverse $\varphi ^{-1}$ is smooth and the composition 
$c_{x}= {\varphi }^{-1}\comp c$ is smooth. Since $X$ is a derivation, it gives rise to a smooth
section $X:S\rightarrow TS$ of the tangent bundle projection 
$\tau :TS\rightarrow S$. Moreover the composition 
$\gamma _{x}=X\comp c_{x}$ is smooth.\smallskip 

\noindent iii) \textbf{Local uniqueness.} This follows from the
local uniqueness of solutions of first order differential equations in $\mathbb{R}^{n}$. \smallskip 

\noindent iv) \textbf{Maximality.} Suppose that $p \le 0 \le q$ are the ends of the domain 
$I_x$ of the integral curve $c_x$ of $X$ starting at $x$ obtained in section i). If 
$p =q =0$ and ${\gamma }_x = X \comp c_x$ cannot be extended to a larger interval, then  
${\gamma }_x$ is maximal. If $q > 0$ and $q = \infty$ or $\lim_{t\nearrow q}c_x(t)$ does 
not exist, then the curve $c_x$ does not extend beyond $q$. If $x_1 = 
\lim_{t \nearrow q}c_x(t)$ exists, then $x_1$ is unique since the topology of $S$ is Hausdorff. 
We can repeat the construction of section i) by starting at the point $x_1$. In this way we 
obtain an integral curve $c^1_{x_1}: I_1 \rightarrow S$ of $X$ starting at $x_1$. Let 
${\widetilde{I}}_1 = I \cup \{ t = q +s \in \R \setrule \, s \in I_1 \cap [0, \infty) \} $ and let 
${\widetilde{c}}_1: {\widetilde{I}}_1 \rightarrow S$ be given by ${\widetilde{c}}_1(t) = 
c_x(t)$ if $t \in I$ and ${\widetilde{c}}_1(t) = c^1_{x_1}(t-q)$ if $t \in \{ q+s \in \R \setrule \, 
s \in I_1 \cap [0, \infty) \} $. Clearly the curve ${\widetilde{c}}_1$ is continuous. Since 
$x_1 = \lim_{t \nearrow q}c_x(t)$, it follows that the left end $p_1$ of $I_1$ is 
strictly less than zero. Hence, the restriction of the curve $c_x$ to the interval 
$(\max(p,p_1) +q, q)$ differs from the restriction of $c^1_{x_1}$ to the interval 
$(\max(p,p_1), 0)$ by the reparametrization $t \mapsto t-q$. Since the curves 
$c_x$ and $c^1_{x_1}$ are smooth, it follows that the curve ${\widetilde{c}}_1$ is smooth. Let 
$q_1$ be the right end of the interval $I_1$. If $q_1 \in I$ and either 
$q_1= \infty$ or $\lim_{t\nearrow q_1}c^1_x(t) $ does not exist, then the curve curve 
${\widetilde{c}}_1$ does not extend beyond $q_1$. Otherwise, we can extend 
${\widetilde{c}}_1$ by an integral curve $c_2$ of $X$ through $x_2 = 
\lim_{t \nearrow q_1}{\widetilde{c}}_1(t)$. 
Continuing this process, we obtain a maximal extension for $t \ge 0$. In a similar 
way we can construct a maximal extension for $t \le 0$. \smallskip 

\noindent v) \textbf{Global uniqueness.}  Let $c: I \rightarrow S$ and $c': I' \rightarrow S$ 
be maximal integral curves of $X$ starting at $x$. Let 
$T^{+} = \{ t \in I \cap I' \setrule \, t > 0 \, \, \mathrm{and} \, \, c(t) \ne c'(t) \} $. Suppose that 
$T^{+} \ne \varnothing $. Since $T^{+}$ is bounded from below by $0$, there is a greatest 
lower bound $\ell $ of $T^{+}$. This implies that $c(t) = c'(t)$ for every $0 \le t \le \ell$ and  
for every $\varepsilon > 0$ there is a $t_{\varepsilon }$ with  
$\ell < t_{\varepsilon } < \ell + \varepsilon$ such that $c(t_{\varepsilon }) \ne c'(t)$. Let 
$x_{\ell} = c(\ell ) = c'(\ell )$ and let $c_{\ell }:I_{\ell } \rightarrow S $ be an integral curve 
of $X$ starting at $x_{\ell }$ as constructed in i). Let $q_{\ell }$ be the right end of $I_{\ell }$. 
If $q_{\ell }>0$, the local uniqueness of integral curves implies that 
$c(t) = c'(t) =c_{\ell }(t - \ell )$ for all $\ell \le t \le \ell +q_{\ell }$. This contradicts the fact 
that $\ell $ is the greatest lower bound of $T^{+}$. If $q_{\ell }=0$, then there is no 
extension of $c_{\ell }$. Let $q$ and $q'$ be the right end of $I$ and $I'$, 
respectively. Since $c$ and $c'$ are maximal integral curves of $X$, it follows that 
$q = q' = \ell $. Hence the set $T^{+}$ is empty, which is a contradiction. A similar 
argument shows that $T^{-} = \{ t \in I \cap I' \setrule \, t <0 \, \, \mathrm{and} \, \, 
c(t) \ne c'(t) \} = \varnothing $. Therefore $c(t) = c'(t)$ for all $t \in I \cap I'$. If 
$I \ne I'$, then this contradicts the fact that $c$ and $c'$ are maximal. Hence 
$I = I'$ and $c = c'$. \end{proof}  

\vspace{-.1in}
\section{Vector fields}

Vector fields on a manifold $M$ are not only derivations of $\mathcal{C}^{\infty }(M)$ 
but they also generate local one-parameter groups of local
diffeomorphisms of $M.$ On a subcartesian space $S$, not all derivations of $%
\mathcal{C}^{\infty }(S)$ generate local one-parameter groups of local
diffeomorphisms of $S,$ see example 6. We reserve the term vector field for
derivations of $\mathcal{C}^{\infty }(S)$ that generate local one-parameter
groups of local diffeomorphisms of $S$. More formally, we adopt the
following definition. A \emph{vector field} on a subcartesian space $S$ is a derivation $X$ of 
$\mathcal{C}^{\infty }(S)$ such that for every $x_{0}\in S,$ there exists a
neighbourhood $U_{x_{0}}$ of $x_{0}\in S$ and $\varepsilon _{x_{0}}>0$ such
that, for every $x\in U_{x_{0}}$, the interval 
$(-\varepsilon _{x_{0}},\varepsilon _{x_{0}})$ is contained in the domain $I_{x}$ of the lifted integral curve $\gamma _{x}:I_{x}\rightarrow TS$ of $X$ and the map 
\begin{equation}
\mathrm{e}^{tX}:U_{x_{0}}\rightarrow S:x\mapsto \mathrm{e}^{tX}(x)=\tau
\comp \gamma _{x}(t)  \label{20}
\end{equation}%
is defined for every $t\in (-\varepsilon _{x_{0}},\varepsilon _{x_{0}})$ and
is a diffeomorphism of $U_{x_{0}}$ onto an open subset $\mathrm{e}%
^{tX}(U_{x_{0}})$ of $S$. \smallskip 

Note that if $X$ is a vector field on $S$, for every $x\in S$, the map $%
c_{x}:I_{x}\rightarrow S:t\rightarrow \mathrm{e}^{tX}(x)$ is an integral
curve of $S$ satisfying equation (\ref{curve}).\footnote{$\mathrm{e}^{tX}(x)$
is a compact version of the notation $(\exp tX)(x)$ used in \cite{sniatycki
2013}} Therefore, if we were only interested in vector fields on $S$, we
could use the definition of integral curves given by equation (\ref{curve}).
We have introduced the notion of lifted integral curves to obtain theorem 8, which 
ensures the existence and uniquenness of maximal lifted integral curves of
derivations of $C^{\infty}(S)$. Theorem 8 is replaces theorem 3.2.1 in \cite{sniatycki 2013},
which is incorrect. Our discussion shows that proofs of all results in 
\cite{sniatycki 2013} regarding vector fields on subcartesian spaces are not
affected by errors in theorem 3.2.1. In particular, all results in section
3.4 and chapter 4 of \cite{sniatycki 2013} are valid. \bigskip

\noindent {\large \textbf{Acknowledgement}} \medskip
\par \noindent The authors would like to thank Larry Bates and Eugene Lerman for stimulating discussions.

\vspace{-.1in}


\begin{thebibliography}{9}

\bibitem{aronszajn} N. Aronszajn, Subcartesian and
subriemannian spaces, \textit{Notices Amer. Math. Soc}. 
\textbf{14} (1967) 111.

\bibitem{joyce 2012} D. Joyce, An introduction to $\mathcal{C}^{\infty }$-schemes and 
$\mathcal{C}^{\infty }$-algebraic geometry, \texttt{arxiv:1104.4951v2}.

\bibitem{sikorski 1967} R. Sikorski, Abstract covariant
derivative, \textit{Coll. Math}. \textbf{18} (1967) 252--272.

\bibitem{sikorski 1972} R. Sikorski, \textit{Wst\k{e}p do geometrii r\'{o}\.{z}niczkowej} [Introduction to differential geometry], Pa\'{n}stwowe Wydawnictwo Naukowe, Warsaw, 1972. 

\bibitem{sniatycki 2013} J. \'{S}niatycki, \textit{Differential Geometry of
Singular Spaces and Reduction of Symmetry}, Cambridge University Press,
Cambridge, 2013.

\bibitem{spivak 2010} D.I. Spivak, Derived smooth manifolds, \textit{Duke Math. J.}, \textbf{153} (2010), 55--128.

\bibitem{walczak 1973} P. Walczak, A theorem on
diffeomorphisms in the category of differential spaces, \textit{Bull. Acad. Polon. Sci., Ser. Sci. Math. Astronom. Phys.} \textbf{21} (1973) 325--329.

\end{thebibliography}
\end{document}